\documentclass{article}
\usepackage{amsmath,amssymb,amsthm}
\usepackage{url}
\usepackage{hyperref}

\let\norm\|
\def\|{\mkern1.5mu{|}\mkern1.5mu} 
\DeclareMathAlphabet{\mathbfit}{OML}{cmm}{b}{it}
\let\sm\smallsetminus
\def\rr{\mathbfit r}
\newcommand\eqdef{\stackrel{\textrm{def}}{=}}
\newcommand\tsum{{\textstyle\sum}}
\newcommand\restr{\mathord\upharpoonright}
\let\osharp=\sharp
\def\sharp{\mathchoice{\raisebox{1.2pt}{$\osharp\mkern 1mu$}}%
                      {\raisebox{1.2pt}{$\osharp\mkern 1mu$}}%
                      {\raisebox{1pt}{$\mkern 1mu\scriptstyle\osharp$}}%
                      {\raisebox{0.5pt}{$\scriptscriptstyle\osharp$}}}
\newcommand\bref[1]{#1} 
\newcommand\blabel[1]{} 

\newtheorem{theorem}{Theorem}
\newtheorem{lemma}[theorem]{Lemma}
\theoremstyle{definition}
\newtheorem{definition}{Definition}
\newtheorem*{condition}{Conditions}

\title{On the Supremum of Singleton Ratios in Submodular Functions}
\author{L. Csirmaz}
\date{\small R\'enyi Institute, Budapest, and UTIA, Prague}

\begin{document}

\maketitle

\begin{abstract}
\noindent
Let $N$ be a finite set of cardinality $n$, and $a\in N$. A submodular
function $f$ on $N$ with $f(a)=1$ is defined to be $a$-reduced if, for any
decomposition $f=g+h$ into submodular functions where $h$ does not depend on
$a$, it follows that $h$ is identically zero. The maximal possible value of
$f$ on the remaining singletons defines a quantity $\lambda$ that
characterizes the degree to which one variable can constrain the value of
another; geometrically, it also limits the possible elongation of the
associated submodular base polytope. We construct an example demonstrating
that $\lambda$ can be as large as $\Omega(n/\log n)$. Furthermore, we
establish a doubly exponential upper bound on $\lambda$. The problem of
narrowing the gap between these bounds remains open.
\end{abstract}

\section{Introduction}

Submodular functions model the law of diminishing marginal returns, where
the incremental value of an item decreases as the set of items grows, see
\cite{balcan2018} or \cite{vives}. Formally, if $A\subseteq B$, then
$f(A\cup\{a\})-f(A) \ge f(B\cup\{a\})-f(B)$. For the purposes of this paper,
we focus on the class of monotone, pointed submodular functions, often
referred to as \emph{polymatroid rank functions} \cite{fujishige}. These
functions are crucial for optimizing subset selection, such as portfolio
diversification, facility location, and auction bidding \cite{balcan2018}.
Finding bounds for the values of submodular functions, as well as
understanding their structural properties, intersects multiple fields of
applied and pure mathematics, including lattice theory, probability,
combinatorial optimization, and machine learning, see \cite{bach2013}. Such
bounds are highly relevant across several domains because submodularity is
the mathematical engine behind modeling ``synergy'' or ``dependence.''
Determining exact values is a deep, and almost always an unsolved problem in
combinatorial optimization and polyhedral combinatorics \cite{manyrays}.
Estimating parameters of submodular functions goes back to the seminal paper
of J.~Edmonds \cite{edmonds}. He introduced the submodular cone and
connected it to polyhedral geometry via polymatroids and generalized
permutahedra. He was the first to highlight the extreme difficulty of
characterizing the indecomposable building blocks (the extreme rays) of this
cone.

Motivated by results in secret sharing \cite{beimel}, where properties of
submodular functions are used to establish bounds on the efficiency of share
distribution, we extend this line of inquiry to investigate whether an
``inherent'' bound exists for the ratio $f(b)/f(a)$ of a submodular function
$f$ where $a$, $b$ are singletons and $f(a)>0$. Since the sum of submodular
functions is submodular, ``inherent'' means that the bound applies only
after subtracting a maximal component that does not depend on $a$; that is,
when the submodular function is \emph{$a$-reduced}. Translated to the
polyhedral representation of submodular functions \cite{edmonds, manyrays},
this problem asks how elongated the base polytope of an $a$-reduced
submodular function can be. More precisely, what is the maximal possible
ratio of the length of the edges adjacent to a common vertex of this
polytope? We establish that the ratio of the edges of its surrounding box 
can be at least $n/\log n$, proving that
the elongation grows with $n$, the size of the ground set. While theoretical
constraints place a doubly exponential upper bound of $2^{2^n}$ on this
ratio, we conjecture that the true growth is significantly smaller,
opening a new direction in the study of submodular geometry.
Possible applications of these bounds are:
\begin{enumerate}
\item \emph{Combinatorial Optimization and Machine Learning.} Submodular
functions are ubiquitous in algorithms, such as graph cuts, facility
location, sensor placement, etc. In algorithm design one can decompose a
submodular optimization problem into a linear combination of simpler
problems. These bounds limit the ratio of the complexity of these
subproblems, and optionally help to prove mathematical lower bounds on how fast
an algorithm can optimize submodular functions over large data sets.
\item \emph{Game Theory and Economics.}
In cooperative game theory, an extremal submodular function represents a
fundamental, irreducible market dynamic of game structure. Knowing these
bounds allows economists to estimate the necessary resources, based on one
item, in the absolute worst-case scenarios for the core of a cooperative game.

\item \emph{Representability in Neural Networks.}
The recent work \cite{piecewise} connects the extension complexity of
polytopes--a measure of how efficiently a shape can be represented--to the
minimum size of ReLU or Maxout networks. Our lower bound of $n/\log n$ on the
``elongatedness'' of the base polytope suggests that as the ground set grows,
the complexity of the neural architectures required to optimize these
functions must also scale. This provides a geometric foundation for
understanding the depth and width requirements for machine learning models
attempting to learn submodular functions \cite{balcan2018}.
\end{enumerate}

The rest of this paper is organized as follows. Section
\ref{sec:background} provides the necessary background on submodular
functions. In Section \ref{sec:main}, we formally define $a$-reduction and prove
our main bound on the ratio $f(b)/f(a)$. Finally, Section
\ref{sec:conclusion} explores implications and lists open problems.

\section{Background}\label{sec:background}

All sets in this paper are finite. Capital letters $A$, $B$, $I$, $K$, etc.,
denote subsets of the fixed \emph{ground set $N$}, also called \emph{base}.
Elements of the ground set are denoted by lowercase letters such as $a$,
$b$, $i$, $j$; they are also called \emph{variables}. The union sign is
frequently omitted, as well as the curly brackets around singletons; thus
$Aa$ denotes the set $A\cup\{a\}$. The ground set $N$ is assumed to have at
least two elements, and $n=|N|$ denotes its cardinality.

We consider only functions that assign real numbers to subsets of
$N$; we say that $f$ is \emph{defined on $N$}, rather than $f$ is defined on the
subsets of $N$. The function $f$ is \emph{pointed} if
$f(\emptyset)=0$, and it is \emph{monotone} if $A\subseteq B$ implies
$f(A)\le f(B)$. The function $f$ on $N$ is \emph{submodular} if the inequality
\begin{equation}\label{eq:submod}
   f(A)+f(B) \ge f(A\cap B)+f(A\cup B)
\end{equation}
holds for arbitrary subsets $A$, $B\subseteq N$; it is \emph{supermodular} if
(\ref{eq:submod}) holds with the inequality sign reversed; and is
\emph{modular} if it is both submodular and supermodular, that is,
(\ref{eq:submod}) holds with equality for all subsets $A$ and $B$. A modular
function $\rr$ can be written as
$$
  \rr:  A \mapsto w_\emptyset+\tsum \{ w_i: i\in A \}, ~~~~ A\subseteq N,
$$
where $w_\emptyset$ and $w_i$ for $i\in N$ are some real numbers. The modular
function $\rr$ is pointed iff $w_\emptyset=0$, and is monotone iff $w_i\ge 0$ for
all $i\in N$. If not stated otherwise, functions on $N$
are assumed to be pointed and monotone. We remark that for an arbitrary set
function $f$ defined on the subsets of $N$, there is a modular function $\rr$
such that $f+\rr$ is both pointed and modular; that is, $f$ is both pointed
and monotone up to a ``modular shift.'' Submodular functions that are also
pointed and monotone are often referred to as \emph{polymatroids} or
\emph{polymatroidal rank functions}, see, e.g., \cite{fujishige}. An
important example of a polymatroidal rank function is the Shannon entropy of
the marginals of $n$ jointly distributed random variables; the commonly used
name ``variable'' for elements of $N$ originates from this example.

For pointed and monotone functions, submodularity is equivalent to the so-called
\emph{diminishing marginal returns} property, expressed as
\begin{equation}\label{eq:1}
  A\subseteq B ~~\mbox{ implies } ~~ f(aA)-f(A) \ge f(aB)-f(B).
\end{equation}
In an economic model, $f(A)$ can denote value of the portfolio containing
the collection of assets in $A$; the difference $f(aA)-f(A)$ is interpreted
as the additional (marginal) value when the asset $a$ is added to $A$.
Formula (\ref{eq:1}) expresses the natural expectation that adding the same
asset to a larger portfolio yields smaller marginal returns.

Notions like conditional entropy and mutual information from Information
Theory prove useful in the study of polymatroids. These notions are
formally extended to arbitrary set functions and will be used as
abbreviations:
\begin{align*}
   f(A\|B) &\eqdef f(AB)-f(B), \\
   f(A,B)  &\eqdef f(A)+f(B)-f(AB), \mbox{ and}\\
   f(A,B\|C) &\eqdef f(AC)+f(BC)-f(ABC)-f(C).
\end{align*}
Using this notation, the marginal returns $f(aA)-f(A)$ can be written as
$f(a\|A)$, while submodularity is the non-negativity of the expression
$f(A,B\|C)$. Polymatroidal rank functions are axiomatized by the so-called
\emph{basic Shannon inequalities}, listed in (\bref{B1}) and (\bref{B2})
below; see, e.g., \cite{yeung-book}:
\begin{itemize}\setlength\itemsep{3pt plus 1pt minus 2pt}
\item[(B1)]\blabel{B1}$f(\emptyset)=0$ and $f(i\|N\sm i)\ge 0$ for all $i\in N$;
\item[(B2)]\blabel{B2}$f(a,b\|K)\ge 0$ for all $K\subset N$ and different
$a,b\in N\sm K$, including $K=\emptyset$.
\end{itemize}
The set of conditions in (\bref{B1}) and (\bref{B2}) is minimal in the sense
that none of them is a consequence of the others \cite{yeung-book}. There
are $n+1$ constraints in (\bref{B1}), and ${n\choose 2}2^{n-2}$ constraints
in (\bref{B2}) for the $2^n$ possible values of the function $f$.
Non-negative linear (conic) combinations of polymatroids are polymatroids;
this follows from the fact that these constraints are linear. Consequently,
the collection of polymatroids on a fixed base set $N$ forms a polyhedral
cone \cite{ziegler}. Polymatroids on the extremal rays of this cone are
called \emph{extremal}. Extremal polymatroids are also characterized by the
property that they can only be decomposed in a trivial way. Namely, if
$f=g+h$, then both $g$ and $h$ are non-negative multiples of $f$. Another
characterization is that an extremal polymatroid satisfies $2^n-1$ linearly
independent constraints from (\bref{B1}) and (\bref{B2}) with equality
(including $f(\emptyset)=0$); these constraints determine the polymatroid up
to a multiplicative factor \cite{impossible}.

For a polymatroid $f$ on $N$ the \emph{base polytope} associated with $f$ is
the collection of those points $\mathbf x=\langle x_i:i\in N\rangle$ in the
$n$-dimensional Euclidean space that satisfy the conditions
\begin{align}
   & \tsum \{x_i: i\in A\} \le f(A) ~~~\mbox{ for all } A\subset N, \mbox{
and}\notag\\
   & \tsum\{ x_i: i\in N\} = f(N),\label{eq:base2}
\end{align}
see \cite{manyrays}. Points of the base polytope have non-negative
coordinates; thus, it is part of the $n$-dimensional rectangular box
$$
  \mathbb B_f =  \{\mathbf x: 0\le x_i\le f(i) : i\in N\}.
$$
Since it satisfies the equality constraint (\ref{eq:base2}), the base
polytope has dimension at most $n-1$. A characterizing property of the base
polytope is that all of its edges are parallel to $\mathbf e_i-\mathbf e_j$
where $\mathbf e_i$ are the unit coordinate vectors, see
\cite{edmonds,manyrays}.

\section{Main Result}\label{sec:main}

Let $N$ be a finite set with cardinality $n>1$, and $a\in N$ be a fixed
element. Our main goal is to estimate the degree to which the value $f(a)$
of the polymatroid $f$ can constrain the values $f(b)$ for other variables
$b\in N$, namely estimating the amount
$$
   C_{f,a} \eqdef \max_{b\in N}~  f(b)/f(a),
$$
when $f(a)$ differs from zero. Clearly, this value can be pumped up by
adding a polymatroid to $f$ that takes zero at $a$ and some large value at
$b$. Polymatroids that have no such an additive component are called
\emph{$a$-reduced}.

\begin{definition}
(a) The polymatroid $h$ \emph{does not depend on $a$} if $h(aA)=h(A)$ for 
all $A\subseteq N\sm a$. 

(b) A polymatroid $f$ is \emph{$a$-reduced}, if for any decomposition
$f=g+h$ to the sum of two polymatroids so that $h$ does not depend on $a$,
it follows that $h$ is identically zero.
\end{definition}

It is easy to see that $h$ does not depend on $a$ if and only if $h(a)=0$.
Extremal polymatroids with $f(a)>0$ are $a$-reduced (as the components of
any decomposition are multiples of $f$), while a typical $a$-reduced
polymatroid is not extremal. The quantity that characterizes the degree to
which $f(a)$ can constraint the value of other variables can be defined as
follows.
\begin{definition}
Suppose the base set $N$ has $n$ elements. Define $\lambda_n$ as
\begin{equation}\label{eq:lambda}
   \lambda_n \eqdef \sup \{ C_{f,a}: f(a)>0 \mbox{ and $f$ is an 
$a$-reduced polymatroid}
\}.
\end{equation}
\end{definition}

Note that by normalizing the polymatroid such that $f(a)=1$, finding the
supremum of the ratio $f(b)/f(a)$ is strictly equivalent to maximizing the
value of the remaining singletons $f(b)$, which motivates our study of the
quantity $\lambda_n$.

The rest of this section is devoted to the proof of the following theorem.
\begin{theorem}\label{thm:main}
$\displaystyle \frac{n}{2\log_2 n} \le \lambda_n < 2^{2^n}$.
\end{theorem}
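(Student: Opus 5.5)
The proof has two independent halves: an explicit construction for the lower bound and a linear-programming argument for the upper bound.

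\emph{Upper bound.} First I reduce to extremal polymatroids. The polymatroid cone is pointed and polyhedral, so any $a$-reduced $f$ is a conic combination $f=\sum_k c_k f_k$ of extremal polymatroids $f_k$ with $c_k>0$. If some $f_k$ had $f_k(a)=0$, then $f_k$ would not depend on $a$. Writing $f=(f-c_kf_k)+c_kf_k$ would then contradict $a$-reducedness. Hence every $f_k$ satisfies $f_k(a)>0$, and
$$f(b)/f(a)=\sum c_kf_k(b)\Big/\sum c_kf_k(a)\le \max_k f_k(b)/f_k(a).$$
So it suffices to bound $f(b)/f(a)$ for extremal $f$.

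An extremal $f$ is determined up to scaling by $2^n-1$ linearly independent tight constraints from (B1), (B2). Each constraint has integer coefficients in $\{0,\pm1\}$, with at most four nonzero entries per row. Normalize by adding the row $f(a)=1$. Then $f$ is the unique solution of a nonsingular $(2^n)\times(2^n)$ integer system whose right-hand side is a unit vector. By Cramer's rule, $f(b)=\det M_b/\det M$. Here $|\det M|\ge1$ because $M$ is integral and nonsingular. By Hadamard's inequality, $|\det M_b|\le\prod(\text{row norms})\le 2^{2^n}$, since each row has norm at most $2$ and there are $2^n$ rows. I need to check the strictness and whether the $f(a)$ row or $f(\emptyset)$ row can be removed to reach exactly $<2^{2^n}$. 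The column of $M_b$ replaced by the unit vector contributes factor $1$, so the bound is strict. This half is routine.

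\emph{Lower bound.} This is the main obstacle: I need an explicit $a$-reduced $f$ with $f(a)=1$ and some $f(b)\ge n/(2\log_2 n)$. My plan is to build a chain that amplifies. Set $m\approx\log_2 n$, split $N\sm\{a,b\}$ into blocks, and design $f$ so that $b$ has rank $k$ while $a$ has rank $1$. To make $h=0$ forced, I would use a strategy of tight inequalities: choose $f$ so that enough Shannon inequalities are tight. Any decomposition $f=g+h$ into polymatroids must respect every tight inequality in each summand, since the inequalities add and each summand satisfies them. Then $h(a)=0$ together with these tight equalities should propagate to $h\equiv0$.

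A natural candidate is linear-algebraic. Over $\mathrm{GF}(2)$, take $b$ to be a vector of length $k$, and let the other elements be subspaces chosen so that each single "bit" of $b$ is recoverable only through $a$ in combination with others. This can be modeled on secret-sharing lower-bound constructions, where $2^m$ coordinates indexed by subsets give an $n/\log n$ loss.

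Concretely, I would index $k\approx n/(2\log_2 n)$ bits of $b$ by the rows of a $2\log_2 n$-element combinatorial design of the remaining variables. I would then verify, as the hardest and most delicate step, that the tightness relations force $h(b\|\cdot)=0$, propagating from $h(a)=0$ through the design. This yields $h(b)=0$ and then $h\equiv0$ by monotonicity and the remaining tight relations.

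If the linear construction fails to be $a$-reduced, the fallback is to compute the $a$-reduced part directly. I would solve the LP that maximizes $h(N)$ subject to $h$ and $f-h$ being polymatroids with $h(a)=0$, then check that $f(b)-h(b)$ stays of order $n/\log n$.
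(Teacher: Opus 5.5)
Your upper bound is essentially the paper's argument: reduce to extremal rays via $a$-reducedness, then apply Cramer and Hadamard. The strictness is cleanest via Laplace expansion along the replaced unit column. That leaves a $(2^n-1)$-minor made of constraint rows of norm at most $2$, giving $2^{2^n-1}$, which is the paper's $2^D$.

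The lower bound, however, has a genuine gap. No polymatroid is actually specified, since the ``combinatorial design'' indexing is never made concrete. Your main route is to make enough Shannon inequalities tight that $h(a)=0$ propagates to $h\equiv 0$, i.e.\ to prove a concrete $f$ is $a$-reduced. That is exactly the hard step, and you leave it unverified. The paper never does this, and even leaves $a$-reducedness of its own example as a conjecture. Your LP fallback only recovers the existence of an $a$-reduced component (Lemma~\ref{lemma:red}). It gives no reason why that component keeps a singleton of size $\Omega(n/\log n)$.

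The missing idea is an inheritance argument that makes reducedness irrelevant. If $h$ does not depend on $a$, then $h(a\|A)=0$ for all $A$, so $g=f-h$ has $g(a)=f(a)$ and $g(a\|A)=f(a\|A)$. Thus conditions of the form $f(a\|A)=0$ ($A\in\mathcal L$) or $f(a\|A)=f(a)$ ($A\in\mathcal U$) pass automatically to the $a$-reduced part. It therefore suffices to find such conditions that are satisfiable and that, by submodularity alone, force a large singleton.

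The paper takes $N=\{a\}\cup X\cup Y$ with $|X|=k$ and $|Y|=2^k$. It orders the subsets $X_0=X,\dots,X_{2^k-1}=\emptyset$ so that no $X_i$ is contained in a later $X_j$, and imposes $X_iy_i\in\mathcal L$ and $X_iY_{i-1}\in\mathcal U$. Lemma~\ref{lemma:3} then gives $f(X,y_i\|Y_{i-1})\ge f(a)$. The chain rule yields $f(X)\ge(2^k-1)f(a)$, so some $x\in X$ has $f(x)/f(a)\ge(2^k-1)/k>n/(2\log_2 n)$ with $n=1+k+2^k$. A $\mathrm{GF}(2)$ realization shows the conditions are consistent: put bits $r^{(i)}_J$ on $x_i$ and set $y_K=r+\sum_{j\in K}r^{(j)}_{K\restr j}$. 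Without this device, or a fully verified $a$-reduced example, your proposal does not establish $\lambda_n\ge n/(2\log_2 n)$.
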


\begin{proof}

First we prove that $\lambda_n$ is actually a maximum, and it is taken by an
extremal polymatroid. As discussed in Section \ref{sec:background}, extremal
polymatroids are on the extremal rays of a polyhedral cone, see
\cite{impossible,ziegler}, and there are finitely many such extremal rays.
Let $e_1,\dots,e_k$,$ e_{k+1},\dots, e_\ell$ be polymatroids on these rays
normalized so that $e_i(a)=1$ if $i\le k$, and $e_i(a)=0$ otherwise. Any
polymatroid on $N$ is a non-negative linear combination of these extremal
ones. Let $f$ be an $a$-reduced polymatroid with $f(a)$ positive; without
loss of generality we may assume that $f(a)=1$. Write $f$ as a linear
combination of the extremal polymatroids with non-negative coefficients
$\mu_i$:
$$
   f = \big(\mu_1e_1+\cdots+\mu_k e_k\big) +\big(\mu_{k+1}e_{k+1}+
     \cdots+\mu_\ell e_\ell\big).
$$
Denoting the second term by $h$, we have $h(a)=0$. Since $f$ is $a$-reduced,
it follows that $h$ is identically zero, therefore
\begin{equation}\label{eq:p1}
   f = \mu_1e_1+\cdots+\mu_k e_k.
\end{equation}
Since $f(a)=e_i(a)=1$, the sum of $\mu_1+\cdots+\mu_k$ is exactly
$1$, thus (\ref{eq:p1}) is a convex combination. It means that $C_{f,a}$ is
upper bounded by the maximum of $C_{e_i,a}$, proving that $\lambda_n$ is
taken by one of the extremal polymatroids.

To estimate $C_{e,a}$ for an extremal polymatroid $e$ we use the
characterization that $e$ satisfies $D=2^n-1$ linearly independent
constraints for the $D+1=2^n$ linear variables from (\bref{B1}) and
(\bref{B2}), see \cite{impossible,manyrays}. Each of the constraints
contains at most four non-zero entries from $(1,1,-1,-1)$, forming a
$(D+1)\times D$ matrix $M$. The ratio $f(b)/f(a)$ can be computed as the
ratio of the determinants of two $D\times D$ submatrices of $M$. Since, by
assumption, $f(a)>0$, the denominator is at least $1$ (a non-zero determinant of a
matrix with integer entries). Consequently, an upper bound on the
determinants of the $D\times D$ submatrices of $M$ gives an upper bound on
$C_{e,a}$.

By Hadamard's inequality, the determinant of a $D\times D$ matrix $A$ is
bounded by the product of the $L_2$-lengths of its rows $r_i$. Since each
row contains at most four $\pm1$, the length of $r_i$ is at most $2$, thus
$$
   |\det(A)| \le \prod_{i=1}^D \norm r_i\norm_2 \le 2^D=2^{2^n-1},
$$
proving the upper bound $\lambda_n< 2^{2^n}$ of the theorem.

\medskip

In the second part we will need a lemma which follows from the reasoning
used above.
\begin{lemma}\label{lemma:red}
Every polymatroid $f$ with $f(a)>0$ has a decompositions $f=g+h$ such
that $g$ is $a$-reduced, and $h$ does not depend on $a$.
\end{lemma}
\begin{proof}
Take the decomposition $f=g+h$ in which $h$ does not depend on $a$ and the
value $h(N)$ is maximal. Continuity and boundedness imply that this maximum
is actually taken. Clearly, in this case the component $g$ is $a$-reduced.
\end{proof}

The lower bound is obtained by exhibiting a set of conditions so that
\begin{itemize}
\item[(a)] if $f$ satisfies all conditions, $f=g+h$ such that $g$ is
$a$-reduced, then $g$ also satisfies all conditions;
\item[(b)] if a polymatroid $f$ satisfies all conditions, then $C_{f,a}\ge
n/(2\log_2 n)$;
\item[(c)] some polymatroid on an $n$ element ground set satisfies all conditions.
\end{itemize}
By (c), there is a polymatroid satisfying these conditions. By Lemma
\ref{lemma:red}, it has an $a$-reduced component, and that component,
denoted by $f$, also 
satisfies these conditions by (a). Finally, (b) provides the required lower bound
on $C_{f,a}$, consequently on $\lambda_n$. Let us see the details.

Let $k\ge 2$ be an integer. The ground set $N$ is the disjoint union $\{a\}
\cup X\cup Y$, where $X$ has $k$ elements, and $Y$ has $2^k$ elements, thus
$n=1+k+2^k$. Subsets of $X$ are arranged so that
$$ X_0=X, ~~\dots, ~~  X_{2^k-1}=\emptyset, $$
furthermore, if $i<j$, then $X_i$ is \emph{not} a subset of $X_j$. Elements
of $Y$ are indexed from $0$ to $2^k-1$, $Y_0=\{y_0\}$ and $Y_i=Y_{i-1}\cup
\{y_i\}$ for $i\ge 1$.

Let $f$ be a polymatroid on $N$ so that $f(a)>0$. Without loss of generality
we may assume $f(a)=1$. For each subset
$A\subseteq N\sm a$ the axioms in (\bref{B1}) and (\bref{B2}) imply
that
\begin{equation}\label{eq:eq}
    f(A) \le f(aA) \le f(A)+f(a) = f(A)+1.
\end{equation}
We write $A\in\mathcal L$ if $f(aA)$ should equal its lower bound, and
$A\in\mathcal U$ if $f(aA)$ should equal its upper bound.
\begin{condition}
The following conditions must hold:
\begin{itemize}
\item[(i)] $X_iy_i\in \mathcal L$ for all $0\le i\le 2^k-1$, and
\item[(ii)] $X_iY_{i-1}\in\mathcal U$ for all $1\le i\le 2^k-1$.
\end{itemize}
\end{condition}

\begin{proof}[Proof of property (a)]
If $f=g+h$ and $f(aA)-f(A)$ is extremal, then so are $g(aA)-g(A)$ and
$h(aA)-h(A)$. Therefore, if (i) and (ii) hold for $f$, then they also hold
of $g$.
\end{proof}

\begin{lemma}\label{lemma:3}
Under these conditions, $ f(X,y_i\|Y_{i-1})\ge 1$.
\end{lemma}
\begin{proof}
Using that $X_i$ is a proper subsets of $X$, the chain rule gives
$$
   f(X,y_i\|Y_{i-1}) = f(X,y_i\|X_iY_{i-1}) + f(X_i,y_i\|Y_{i-1}).
$$
Since $X_iy_i\in\mathcal L$ implies $X_iY_{i-1}y_i\in\mathcal L$,
Conditions (i) and (ii) above imply
$$
   f(X,y_i\|X_iY_{i-1}) = f(X,y_i\|aX_iY_{i-1})+f(a) \ge 1,
$$
proving the lemma.
\end{proof}
\begin{proof}[Proof of property (b)]
Since $Y_0=\{y_0\}$ and $Y_i=y_iY_{i-1}$, the chain rule gives
$$
   f(X,Y_i\|y_0)=f(X,y_i\|Y_{i-1})+f(X,y_{i-1}\|Y_{i-2})+\cdots
          +f(X,y_1\|Y_0).
$$
According to Lemma \ref{lemma:3}, each term on the right hand side is at
least $1$. Therefore,
\begin{equation}\label{eq:tight}
   f(X) \ge f(X\|y_0) \ge f(X,Y_{2^k-1}\|y_0) \ge 2^k-1.
\end{equation}
Now $X$ has $k$ elements, and $\sum_{b\in X} f(b) \ge f(X) \ge 2^k-1$. It
follows that there is an element $b\in X$ satisfying
$$
    f(b) \ge \frac{2^k-1}{k} > \frac{n}{2\log_2 n},
$$
since $n=1+k+2^k$. This and $f(a)=1$ implies $C_{f,a}\ge n/(2\log_2 n)$, as
claimed.
\end{proof}
\begin{proof}[Proof of property (c)]
It remains to construct a polymatroid on the base set $\{a\}\cup X\cup Y$
that satisfies conditions (i) and (ii). The construction also shows that the
bound obtained in (\ref{eq:tight}) is tight and cannot be improved. In the
construction each element of $N$ gets one or more independent random bits.
The value of the polymatroid $f$ on $A$ is the Shannon entropy of the
variables in $A$. First, $a\in N$ gets a single, unbiased random bit $r\in
\{0,1\}$, thus $f(a)=1$. The condition $A\in\mathcal L$ means
$f(A)=f(aA)$; this holds if and only if the value of the random bit given to
$a$ is determined by the values of the variables in $A$. Similarly, the
condition $A\in\mathcal U$ means $f(aA)=f(A)+f(a)$, that is, the value of
$a$ is independent of the values of the variables in $A$.

Let us introduce some notation. Enumerate elements of $X$ as
$\{x_1,\dots,\allowbreak x_k\}$. For each $J\subseteq \{1,\dots,k\}$ let
$X_J=\{x_i:i\in J\}$, and $J\restr i=\{j\in J:j<i\}$. For example, $J\restr
1$ is the empty set for every such subset $J$. Subsets of $X$ were arranged
and indexed from $0$ to $2^k-1$; let $\sharp J$ be the index of the subset
$X_J$ in this order; thus $\sharp\{1,\dots,k\}=0$, and $\sharp\mkern1mu
\emptyset=2^k-1$. We write $J\prec K$ when $X_J$ precedes $X_K$, that is,
when $\sharp J<\sharp K$. By the construction, $J\prec K$ implies that $J$
is \emph{not} a subset of $K$, implying that the difference $J \sm K$ is not
empty. Elements of the set $Y$ are written as $y_K$ using subsets of
$\{1,\dots,k\}$ instead of specifying the index $\sharp K$ explicitly as in
$y_{\sharp K}$.

Using this notation, condition (i) requires that (the value of) $a$ is
determined by the values of $X_J$ and $y_J$, while (ii) requires $a$ to be
independent of the values of $X_J$ and $\{y_K: K\prec J\}$ for all subsets
$J$ of $\{1,\dots,k\}$. Let us see the construction.
\begin{itemize}
\item $a$ get a single random bit $r$.
\item $x_i$ for $1\le i\le k$ gets $2^{i-1}$ many independent random
bits, denoted as $r^{(i)}_J$ for every $J\subseteq \{1,\dots,i-1\}$.
\item $y_K$ gets the single bit $y_K=r+\sum_{j\in K} r^{(j)}_{K\restr j}$;
addition is modulo $2$.
\end{itemize}
Clearly, $X_J$ and $y_J$ together determine the value of $r$; moreover
$X$ gets a total of $1+2+\cdots+2^{k-1}=2^k-1$ independent random bits, thus
$f(X)=2^k-1$, achieving the bound in (\ref{eq:tight}). It remains to show
that $r$ is independent of $X_J$ and $\{y_K:K\prec J\}$. Since $K\prec J$
implies that $K\sm J$ is not empty, it suffices to show that $r$ is
independent of $X_J$ and the variables $\{y_K: K\sm J\ne\emptyset\}$. Fixing
the values of these random variables, the bits $r^{(j)}_L$ are fixed for
$j\in J$. The unknown bits are $r$ and $r^{(j)}_L$ for $j\notin J$ so that
their values must provide the correct $y_K$ values. It means that they must
satisfy the system of equations
$$
   r+\sum_{j\in K\sm J} r^{(j)}_{K\restr j} = y_K+\sum_{j\in K\cap J}
r^{(j)}_{K\restr j}, ~~~~~ K\sm J\ne\emptyset,
$$
where the addition is modulo $2$. The right hand side values are fixed.
Since none of the sums on the left hand side are empty, this system clearly
has the same number of solutions for $r=0$ and for $r=1$, which proves the
required independence.
\end{proof}
\let\qed\relax\end{proof}

\section{Conclusions}\label{sec:conclusion}

Motivated by results in secret sharing \cite{beimel}, we define the quantity
$\lambda_n$, which bounds the supremum of the ratio $f(b)/f(a)$ for an
$a$-reduced polymatroid $f$ on an $n$-element ground set. This essentially
characterizes the maximum degree to which the value of a polymatroid at a
specific variable $a$ contrains the values of the remaining singletons. We
show that this maximal value is attained by an extremal polymatroid,
indicating that $\lambda_n$ is deeply tied to the structure of the
submodular cone first studied by J.~Edmonds \cite{edmonds}. Geometrically,
$\lambda_n$ also bounds the maximum elongation of the bounding box $\mathbb
B_f$ of the base polytope of $f$, providing new insights into the geometry
of generalized permutahedra \cite{piecewise}.

Our main result provides lower and upper bounds on $\lambda_n$. The doubly
exponential upper bound follows from a rough estimate of the maximal rank
sub-determinants of the matrix of polymatroid axioms. The resulting upper bound
$2^{2^n-1}$ can be slightly tightened to $2^{2^n-n-2}$ without altering its
asymptotic magnitude. On the other hand, we prove that $\lambda_n$ is at least
$\Omega(n/\log n)$, thus it grows almost linearly. The main idea is that, in
a polymatroid, the inequality
\begin{equation}\label{eq:bb}
     0 \le f(a\|A) \le f(a)
\end{equation}
holds for all subsets $A$ of the ground set $N$. We partition specific
subsets of $N\sm a$ into two families, $\mathcal L$ and $\mathcal U$,
where the lower and upper bounds of (\ref{eq:bb}) are tight, respectively.
If $f(a\|A)$ takes one
of the extremal values, then its $a$-reduced component also takes the same
extremal values. This approach bypasses the need to explicitly verify
whether the construction is $a$-reduced. We construct families
$\mathcal L$ and $\mathcal U$ that force a small subset of $N$ to take a large
value, leading to the stated lower bound. We remark that the applied method
alone cannot produce superlinear lower bound on $\lambda_n$, see
\cite{sslarge}. We conjecture that the polymatroid presented in the proof of
Theorem \ref{thm:main}(c) is actually $a$-reduced, which would provide the
stronger lower bound $n-2\log_2 n\le\lambda_n$.

The complete list of extremal polymatroids is available for $n\le 5$, while,
for $n=6$, a partial list containing around $4.0{\cdot}10^{10}$ extremal
polymatroids has been generated \cite{impossible}. These lists provide
exact values for $n\le 5$ and a lower estimate for $n=6$:
$$
   \lambda_3=1,~~~ \lambda_4=2,~~~  \lambda_5=4, ~~~ \lambda_6\ge 9.
$$
Based on these sporadic values, we conjecture that $\lambda_n$ grows at
least exponentially. Proving this conjecture, along with lowering the doubly
exponential upper bound, remains a challenging open problem.

\section*{Funding}
The research reported in this paper was partially funded by the ERC Advanced
Grant ERMiD.

\end{document}